\setlist{noitemsep,topsep=0pt,parsep=0pt,partopsep=0pt}
\newtheorem{theorem}{Theorem}
\newtheorem{definition}{Definition}
\newtheorem{remark}{Remark}
\begin{document}

\title{New adaptive synchronization algorithm for a general class of hyperchaotic complex-valued systems with unknown parameters and its application to secure communication }
\author{A. A.-H. Shoreh}
\email[]{Corresponding author: ahmed.shoreh@yahoo.com}
\affiliation{Faculty of Mathematics and Mechanics, St. Petersburg State University,
Peterhof, St. Petersburg, Russia}
\affiliation{Department of Mathematics, Faculty of Science, Al-Azhar University, Assiut, 71524, Egypt}
\author{N. V. Kuznetsov}
\affiliation{Faculty of Mathematics and Mechanics, St. Petersburg State University,
Peterhof, St. Petersburg, Russia}
\affiliation{Faculty of Information Technology,
University of Jyv\"{a}skyl\"{a}, Jyv\"{a}skyl\"{a}, Finland}
\affiliation{Institute for Problems in Mechanical Engineering RAS, Russia}
\author{T. N. Mokaev}
\affiliation{Faculty of Mathematics and Mechanics, St. Petersburg State University,
Peterhof, St. Petersburg, Russia}

\date{\today}

\keywords{Adaptive synchronization, complex-valued systems, chaos, hyperchaos, hidden attractors, secure communication}
\begin{abstract}
This report aims to study adaptive synchronization between a general class of hyperchaotic complex-valued systems with unknown parameters, which is motivated by extensive application areas of this topic in nonlinear sciences (e.g., secure communications, encryption techniques, etc.). Based on the complexity of hyperchaotic dynamical systems, which may be beneficial in secure communications, a scheme to achieve adaptive synchronization of a general class of hyperchaotic complex-valued systems with unknown parameters is proposed. To verify scheme's consistency, the control functions based on adaptive laws of parameters are derived analytically, and the related numerical simulations are performed. The complex-valued Rabinovich system describing the parametric excitation of waves in a magneto-active plasma is considered as an interesting example to study this kind of synchronization. A scheme for secure communication and improving the cryptosystem is proposed; the sketch is constructed to split the message and inject some bit of information signal into parameters modulation and the other bit into the transmitter system's states, which, in turn, complicates decryption task by intruders. Meanwhile, the information signal can be accurately retrieved at the receiver side by adaptive techniques and decryption function. Different types of encrypted messages are considered for testing the robustness of the proposed scheme (e.g., plain text and gray images with diverse scales of white Gaussian noise).

\end{abstract}

\maketitle
\section{Introduction}
Synchronization and control of chaotic dynamical systems are important topics in
applied science owing to their vast application fields in physical systems, networks, image processing, secure communications, stock
markets etc. (see e.g.\cite{Xu-Wang-2014,Tavazoei-Haeri-2008,Asheghan-Miguez-2011,Martin-Poon-2001}). Starting from the pioneering work of Pecora and Carroll \cite{Pecora-Carroll-1990}, in which an effective technique to synchronize two identical chaotic systems with different starting points is proposed, diverse types of chaos synchronization methods have been discovered to synchronize chaotic systems, such as complete synchronization \cite{Lin-He-2005,Mahmoud-Mahmoud-2010}, active control \cite{Zhang- Ma-2004,Farghaly-Shoreh2018,Shoreh-Kuznetsov-2021}, lag synchronization \cite{Du-Zeng-2010,Shoreh-Kuznetsov-2020}, sliding mode control \cite{Tavazoei-Haeri-2008}, cluster synchronization \cite{Tang-Park-2018}, adaptive synchronization \cite{He-Tu-2011,Xua-Zhou-2011,Li-Cao-2008} and many more. Up to date, chaos synchronization has been studied in depth for dynamical systems described by real-valued variables. Meantime, in the applied science there are many problems, such as, e.g., problems of detuned laser optics, or baroclinic instability (see \cite{Ning-Haken-1990,Gibbon-Haken-1982}), which are described by complex-valued dynamical systems. Consideration of such systems in synchronization problems rather than real-valued ones and the related doubling of the number of variables may increase synchronization quality and efficiency, which,in turn, is crucial for many applications, such as, e.g., secure communication and cryptosystems design \cite{Mahmoud-Bountis-2007}.
\par Many chaos synchronization methods are constructed using known parameters of systems, and the corresponding controllers can be easily derived. However, sometimes the parameters of systems in engineering applications may be uncertain in advance and change over time \cite{Mahmoud-2012,Xu-Zhou-2010,Liao-Lin-1999,Hao-2010}. So, it is more sensible to investigate synchronization and develop corresponding methods for such dynamical systems with unknown parameters. In \cite{Mahmoud-2012}, Mahmoud proposed a scheme to achieve adaptive synchronization for a class of hyperchaotic complex-valued systems with uncertain parameters, which can be written in a general form as follows:
\begin{equation}\label{matrix:sys}
  \dot{\textsf{x}}=F(\textsf{x})A+G(\textsf{x}),
\end{equation}
where $\textsf{x}\in \mathbb{C}^{n}$ is a complex state vector, $F\in\mathbb{C}^{n\times n}$, $A$ is a constant vector of system parameters that may be complex or real, $G\in \mathbb{C}^{n}$ is a continuous nonlinear vector function. Another scheme to realize adaptive dual synchronization of the same class \eqref{matrix:sys} with uncertain parameters was proposed in \cite{Mahmoud-Farghaly-2018}. Many complex-valued systems can be written in the form \eqref{matrix:sys}, for instance, the complex-valued Lorenz system \cite{Gibbon-Haken-1982}
,the complex-valued L$\ddot{u}$ system \cite{Mahmoud-Mahmoud-2009}
and so on. Remark that for all systems in the form \eqref{matrix:sys}, the system's parameters associate with linear terms, and only in this case, the schemes in \cite{Mahmoud-2012,Mahmoud-Farghaly-2018} are applicable. Further, there is a class of complex-valued dynamical systems; the parameters are associate with linear and nonlinear terms and can't be written in the form \eqref{matrix:sys}. Therefore, in order to overcome these difficulties in this article we propose a general formula for the complex-valued systems as follows:
\begin{equation}\label{matrix:general:sys}
 \dot{\textsf{x}}=F(\textsf{x})A+G(\textsf{x})+H(\textsf{x})B,
\end{equation}
where $\textsf{x}\in \mathbb{C}^{n}$ is the complex state vector, $F\in\mathbb{C}^{n\times n}$, $A,B$ are constant vectors
of system parameters that associate with linear and nonlinear terms, respectively, that may be complex
or real, $G\in \mathbb{C}^{n}$ is continuous vector
function, $H\in\mathbb{C}^{n\times n}$. Note that if $B=0$, we get the formula \eqref{matrix:sys}.
Thus, our aim in this work is to design a general scheme to achieve adaptive synchronization with entirely unknown parameters applicable for systems \eqref{matrix:general:sys}.
\par Chaotic (hyperchaotic) systems play a significant role in secure communication systems due to their complicated behaviors and sensitive dependence on initial conditions. Numerous schemes of chaotic (hyperchaotic)-based secure communication systems are proposed in the literature. The ideas of their  techniques are follows: some schemes are designed to send the information signal between the transmitter and the receiver through one public channel, and the message by suitable encryption function is injected into the states of the transmitter (see e.g., \cite{Liao-Tsai-2000,Li-Xu-2004,Wu-Wang-2011,Mahmoud-Mahmoud-Arafa-2016}); other ones use the same algorithm, but instead of one public channel there are two channels: one to send the states of the transmitter system, and the second one to transmit the information-bearing signal. To get rapid synchronization and more security (see e.g., \cite{Shoreh-Kuznetsov-2020,Shoreh-Kuznetsov-2021,Jiang-2002}). Further, rather than inserting the message into the states of the transmitter, another type of scheme is designed, where the message is injected into the parameters of the transmitter, and by the adaptive laws of parameters the message can be recovered (see e.g., \cite{He-Cai-2016,Mahmoud-Farghaly-2018}).
\par In this article, we propose a novel hyperchaotic-based secure communication scheme. The scheme is designed to split the information signal and distribute it between two channels, which increases the security of the communication system and complicates the decryption task by intruders. Some bit of information signal is injected into parameters modulation and transmitted through one of the two channels; meanwhile,  the other bit is injected into the transmitter states and sent over the second channel. At the receiver side, the information signal can be accurately retrieved by adaptive techniques and a decryption function. The proposed scheme is robust for diverse scales of white Gaussian noise, which will be demonstrated below.
\par Another feature, which plays an important role in the quality of secure communication, is related to the questions of how nontrivial is the dynamics possessed by the system and how difficult it is to localize attractors in it (see e.g., \cite{Mahmoud-Farghalys-2017}).
To localize an attractor numerically, one needs to examine its basin of attraction and select an initial point in it. If the basin of attraction for a specific attractor is connected with the unstable manifold of an unstable equilibrium, then the localization procedure is rather simple.
From this perspective, the following classification of attractors is suggested \cite{Leonov-Kuznetsov-2011,Leonov-Kuznetsov-2013,Leonov-Kuznetsov-2015,Kuznetsov-Mokaev-2020}: an attractor is called a hidden attractor if its basin of attraction doesn't intersect with any small open neighborhood of an equilibrium point; otherwise, it is called a self-excited attractor.
For instance, hidden attractors are periodic or chaotic attractors in systems without equilibria, either with only one stable equilibrium, or with the coexistence of attractors in multistable engineering systems.
In general, localization of hidden attractors could be a challenging task, which requires developing of special analytical-numerical procedures \cite{Leonov-Kuznetsov-2013}.
\par In this article, a new scheme to achieve adaptive synchronization for a class of hyperchaotic complex-valued systems
with unknown parameters is proposed. Based on this kind of synchronization, a novel scheme for secure communication with improved a new cryptosystem is organised and tested on the basis of the complex-valued Rabinovich system.
\section{Adaptive synchronization for a general class of hyperchaotic complex-valued systems with unknown parameters}
In this section, we design a scheme to derive the control functions to achieve adaptive synchronization between two
identical complex-valued systems with entirely unknown parameters. Suppose the drive and response systems in the matrix form read:
\begin{equation}\label{adaptive:drive:sys}
  \dot{\textsf{x}}=F(\textsf{x})A+G(\textsf{x})+H(\textsf{x})B,
\end{equation}
\begin{equation}\label{adaptive:response:sys}
 \dot{\textsf{y}}=F(\textsf{y})\hat{A}+G(\textsf{y})+H(\textsf{y})\hat{B}+\theta(\textsf{x},\textsf{y}),
\end{equation}
where $\textsf{x},\textsf{y}\in \mathbb{C}^{n}$ are the complex state vectors, $F\in\mathbb{C}^{n\times n}$, $A,B$ are constant vectors
 of system parameters, $G\in \mathbb{C}^{n}$ is a continuous vector
function, $H\in\mathbb{C}^{n\times n}$, $\theta: \mathbb{C}^{n}\times \mathbb{C}^{n}\rightarrow \mathbb{C}^{n}$ is control
vector function of the response system \eqref{adaptive:response:sys}, the parameters modulation $\hat{A}, \hat{B}$ are defined as follows: $e_{A}=\hat{A}-A, e_{B}=\hat{B}-B$. The error can be written as
\begin{equation}\label{adaptive:error:sys}
  e(t)=\textsf{y}(t)-\textsf{x}(t).
\end{equation}
\theoremstyle{definition}
\begin{definition}
The derive system \eqref{adaptive:drive:sys} and response system \eqref{adaptive:response:sys} achieve adaptive synchronization if
\begin{equation*}
 \lim_{t\rightarrow \infty}||e(t)||=\lim_{t\rightarrow \infty}\parallel \textsf{y}(t)-\textsf{x}(t)\parallel=0.
\end{equation*}
\end{definition}
\par We will derive an appropriate
control to achieve adaptive synchronization between systems \eqref{adaptive:drive:sys} and \eqref{adaptive:response:sys}.
\begin{theorem}\label{theorem1:adaptive}
Adaptive synchronization between the derive system \eqref{adaptive:drive:sys} and the response system \eqref{adaptive:response:sys} will achieve if the control vector function is designed as follows:
\begin{multline}\label{adaptive:control:sys}
  \theta(\textsf{x},\textsf{y})=[F(\textsf{x})-F(\textsf{y})]A+ G(\textsf{x})-G(\textsf{y})+[H(\textsf{x})-H(\textsf{y})]B \\
  -K_{1} e-K_{2} e_{A}-K_{3} e_{B},
\end{multline}
where $K_{1}=diag(k_{11},k_{12},...,k_{1n})$, $K_{2}=diag(k_{21},k_{22},...,k_{2n})$, $K_{3}=diag(k_{31},k_{32},...,k_{3n})$ are positive definite diagonal gain matrices, and the updating of the parameters modulation are chosen as
\begin{equation}\label{param:modulation:A}
  \dot{e}_{A}=\dot{\hat{A}}=-(F(\textsf{y}))^{\ast}e+K_{2}e,
\end{equation}
\begin{equation}\label{param:modulation:B}
  \dot{e}_{B}=\dot{\hat{B}}=-(H(\textsf{y}))^{\ast}e+K_{3}e.
\end{equation}
\end{theorem}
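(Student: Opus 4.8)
The plan is to build a single quadratic Lyapunov function on the combined error state $(e,e_A,e_B)$ and to show that the control \eqref{adaptive:control:sys} together with the update laws \eqref{param:modulation:A}--\eqref{param:modulation:B} is engineered precisely so that every indefinite cross term in its derivative cancels. First I would derive the closed-loop error dynamics. Differentiating \eqref{adaptive:error:sys} and substituting \eqref{adaptive:drive:sys}, \eqref{adaptive:response:sys}, and \eqref{adaptive:control:sys}, the $G$-terms cancel outright, while the $F$- and $H$-terms collapse using $\hat{A}-A=e_A$ and $\hat{B}-B=e_B$; concretely $F(\textsf{y})\hat{A}+[F(\textsf{x})-F(\textsf{y})]A-F(\textsf{x})A=F(\textsf{y})e_A$ and likewise for $H$. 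The expected outcome is
\begin{equation*}
  \dot{e}=F(\textsf{y})e_A+H(\textsf{y})e_B-K_1 e-K_2 e_A-K_3 e_B.
\end{equation*}

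Next I would introduce the real, positive-definite candidate
\begin{equation*}
  V=\tfrac{1}{2}\bigl(e^{\ast}e+e_A^{\ast}e_A+e_B^{\ast}e_B\bigr),
\end{equation*}
whose derivative is $\dot{V}=\mathrm{Re}\bigl(e^{\ast}\dot{e}+e_A^{\ast}\dot{e}_A+e_B^{\ast}\dot{e}_B\bigr)$, using $\tfrac{d}{dt}(z^{\ast}z)=2\,\mathrm{Re}(z^{\ast}\dot{z})$. Inserting the error dynamics and \eqref{param:modulation:A}--\eqref{param:modulation:B}, the decisive cancellations appear: the cross term $\mathrm{Re}(e^{\ast}F(\textsf{y})e_A)$ is annihilated by $-\mathrm{Re}(e_A^{\ast}(F(\textsf{y}))^{\ast}e)$ arising from $\dot{e}_A$, since $\mathrm{Re}(z)=\mathrm{Re}(z^{\ast})$; the analogous $H$-cross term cancels the same way; and because $K_2,K_3$ are real (hence Hermitian) diagonal matrices, $-\mathrm{Re}(e^{\ast}K_2 e_A)$ cancels $+\mathrm{Re}(e_A^{\ast}K_2 e)$, and similarly for $K_3$. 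Only one term survives,
\begin{equation*}
  \dot{V}=-e^{\ast}K_1 e\le 0,
\end{equation*}
with equality iff $e=0$, since $K_1$ is positive definite.

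To conclude convergence I would argue that $V\ge 0$ is nonincreasing, so $e,e_A,e_B$ stay bounded and $\int_0^{\infty}e^{\ast}K_1 e\,dt\le V(0)<\infty$. Since the drive/response trajectories evolve on a bounded (hyperchaotic) attractor, $F(\textsf{y})$ and $H(\textsf{y})$ are bounded, hence $\dot{e}$ is bounded and $e$ is uniformly continuous; Barbalat's lemma then forces $\|e(t)\|\to 0$, which is exactly the adaptive synchronization claimed in the theorem.

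The main obstacle I anticipate is not any single estimate but the complex-conjugate bookkeeping needed for the cross terms to vanish: the appearance of the conjugate-transposed $(F(\textsf{y}))^{\ast}$ and $(H(\textsf{y}))^{\ast}$ in \eqref{param:modulation:A}--\eqref{param:modulation:B} is precisely what renders $\dot{V}$ negative semidefinite, and the real, diagonal (Hermitian) structure of $K_2,K_3$ is what lets the gain cross terms cancel. A secondary subtlety is that $\dot{V}$ is negative definite only in $e$ and not in the parameter errors $e_A,e_B$, so the final step genuinely needs a Barbalat/LaSalle argument rather than a direct Lyapunov conclusion, and it tacitly relies on attractor boundedness to secure the uniform continuity of $e$.
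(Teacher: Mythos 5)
Your proposal is correct, and its core coincides with the paper's own proof: you derive the same closed-loop error dynamics, use the same quadratic Lyapunov function \eqref{adaptive:Lyapunov}, and exploit exactly the same conjugate-pair cancellations --- the $(F(\textsf{y}))^{\ast}$ and $(H(\textsf{y}))^{\ast}$ terms in \eqref{param:modulation:A}--\eqref{param:modulation:B} annihilating the cross terms $\mathrm{Re}(e^{\ast}F(\textsf{y})e_{A})$, $\mathrm{Re}(e^{\ast}H(\textsf{y})e_{B})$, and the real diagonal $K_{2},K_{3}$ cancelling the gain cross terms --- arriving at $\dot{V}=-e^{\ast}K_{1}e$, which is \eqref{adaptive:Lyapunov:derivat2}. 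Where you genuinely diverge is the concluding step, and there your version is the more rigorous one. The paper asserts that $\dot{V}$ is negative definite and invokes Lyapunov stability theory to conclude $(e,e_{A},e_{B})\to 0$; this is not justified, because $\dot{V}=-e^{\ast}K_{1}e$ vanishes on the whole subspace $\{e=0\}$ irrespective of $e_{A},e_{B}$, i.e.\ it is only negative \emph{semi}definite in the full state $\chi=(e,e_{A},e_{B})$. In particular, convergence of the parameter errors to zero does not follow from this argument alone (it generally requires a persistent-excitation condition); only their boundedness does. Your route --- monotonicity of $V$ giving boundedness of $(e,e_{A},e_{B})$, integrability of $e^{\ast}K_{1}e$, boundedness of $\dot{e}$ (via boundedness of the drive trajectory together with boundedness of $e$, $e_{A}$, $e_{B}$, which bounds $F(\textsf{y})$ and $H(\textsf{y})$), and then Barbalat's lemma --- correctly yields $\|e(t)\|\to 0$, which is exactly what the paper's definition of adaptive synchronization requires. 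So your proof establishes the theorem as stated while avoiding the paper's overclaim; the price is the extra Barbalat machinery and making explicit the (mild, but genuine) assumption that the drive trajectory remains bounded, which the paper leaves tacit.
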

\begin{proof}
The derivative of the synchronization error \eqref{adaptive:error:sys} to the time reads:
\begin{equation}\label{adaptive:error:derivat:sys}
  \dot{e}(t)=\dot{\textsf{y}}(t)-\dot{\textsf{x}}(t).
\end{equation}
From \eqref{adaptive:drive:sys} and \eqref{adaptive:response:sys}, we obtain
\begin{multline}\label{adaptive:error:derivat1:sys}
  \dot{e}(t)=[F(\textsf{y})-F(\textsf{x})]A+F(\textsf{y})e_{A}+G(\textsf{y})-G(\textsf{x}) \\
  +[H(\textsf{y})-H(\textsf{x})]B+H(\textsf{y})e_{B}+\theta.
\end{multline}
Define the following Lyapunov function:
\begin{equation}\label{adaptive:Lyapunov}
  V(\chi)=\frac{1}{2}\Big(e^{\ast}e+e_{A}^{\ast}e_{A}+e_{B}^{\ast}e_{B}\Big),
\end{equation}
where $\chi=(e,e_{A},e_{B})$.\\
The time derivative of $V$ is
\begin{equation}\label{adaptive:Lyapunov:derivat}
  \dot{V}(\chi)=\frac{1}{2}\Big(e^{\ast}\dot{e}+\dot{e}^{\ast}e+e_{A}^{\ast}\dot{e}_{A}+\dot{e}^{\ast}_{A}e_{A}
  +e_{B}^{\ast}\dot{e}_{B}+\dot{e}^{\ast}_{B}e_{B}\Big).
\end{equation}
Substituting \eqref{adaptive:control:sys}, \eqref{param:modulation:A}, \eqref{param:modulation:B} and \eqref{adaptive:error:derivat1:sys} in \eqref{adaptive:Lyapunov:derivat}, we get
\begin{multline}\label{adaptive:Lyapunov:derivat1}
  \dot{V}(\chi)=\frac{1}{2}\Big(e^{\ast}(F(\textsf{y})-F(\textsf{x}))A+e^{\ast}F(\textsf{y})e_{A}+e^{\ast}(G(\textsf{y})-G(\textsf{x}))\\
  +e^{\ast}(H(\textsf{y})-H(\textsf{x}))B+e^{\ast}H(\textsf{y})e_{B}+e^{\ast}(F(\textsf{x})-F(\textsf{y})A \\
  +e^{\ast}(G(\textsf{x})-G(\textsf{y}))+e^{\ast}(H(\textsf{x})-H(\textsf{y}))B-e^{\ast}K_{1} e-e^{\ast}K_{2} e_{A} \\
  -e^{\ast}K_{3} e_{B}+A^{\ast}(F(\textsf{y})-F(\textsf{x}))^{\ast}e+e_{A}^{\ast}(F(\textsf{y}))^{\ast}e+(G(\textsf{y})-G(\textsf{x}))^{\ast}e \\
  +B^{\ast}(H(\textsf{y})-H(\textsf{x}))^{\ast}e+e_{B}^{\ast}(H(\textsf{y}))^{\ast}e+A^{\ast}(F(\textsf{x})-F(\textsf{y}))^{\ast}e\\
  +(G(\textsf{x})-G(\textsf{y}))^{\ast}e+B^{\ast}(H(\textsf{x})-H(\textsf{y}))^{\ast}e-e^{\ast}K_{1} e-e_{A}^{\ast}K_{2}e \\
  -e_{B}^{\ast}K_{3}e -e_{A}^{\ast}(F(\textsf{y}))^{\ast}e+e_{A}^{\ast}K_{2}e-e^{\ast}(F(\textsf{y}))e_{A}+e^{\ast}K_{2}e_{A}\\
  -e_{B}^{\ast}(H(\textsf{y}))^{\ast}e+e_{B}^{\ast}K_{3}e-e^{\ast}(H(\textsf{y}))e_{B}+e^{\ast}K_{3}e_{B}\Big),
\end{multline}
then we have
\begin{equation}\label{adaptive:Lyapunov:derivat2}
  \dot{V}(\chi)=-e^{\ast}K_{1}e.
\end{equation}
Since $K_{1}$ is a positive definite matrix then $e^{\ast}K_{1}e>0.$ Hence, $\dot{V}<0$. It is obvious that $V(\chi)$ is positive definite, $\dot{V}(\chi)$ is negative definite and $V(\chi)\rightarrow \infty$ as $\chi\rightarrow \infty$. According to the Lyapunov stability theory, the error $\chi=(e,e_{A},e_{B})$ is asymptotically sable i.e., $\displaystyle$, thus $(e,e_{A},e_{B})\rightarrow0$ as $t\rightarrow\infty$. This completes the proof.
\end{proof}
\section{The hyperchaotic complex-valued Rabinovich system and its dynamics}
The complex-valued Rabinovich system describing the parametric excitation of waves in a magneto-active plasma can be written as follows \cite{Rabinovich-1978}:
\begin{equation}\label{Rabinovich:sys}
\left\{
  \begin{array}{ll}
    \dot{x}=-\upsilon x +y x^{\ast}+z y^{\ast},  \\
     \dot{y}=\alpha y -x^{2} +2 x^{\ast} z-\beta |y|^{2}y,  \\
     \dot{z}=\gamma z- 3 x y -\beta |z|^{2} z,
  \end{array}
\right.
\end{equation}
where $x=x_{1}+i x_{2}$, $y=x_{3}+i x_{4}$, $z= x_{5}+i  x_{6}$, $i=\sqrt{-1}$, a ''star'' superscript denotes the complex conjugate of a variable and $\upsilon, \alpha, \beta, \gamma$ are real parameters.
By equating real and imaginary parts of \eqref{Rabinovich:sys}, we obtain the following real-valued autonomous system:
\begin{equation}\label{Rabinovich:real:sys}
\left\{
  \begin{array}{ll}
    \dot{x}_{1}=-\upsilon x_{1}+ x_{1} x_{3}+ x_{2} x_{4}+ x_{3} x_{5}+ x_{4} x_{6},  \\
    \dot{x}_{2}=-\upsilon x_{2}+ x_{1} x_{4}- x_{2} x_{3}+ x_{3} x_{6}- x_{4} x_{5},  \\
    \dot{x}_{3}=\alpha x_{3}+ (x^{2}_{2} -x^{2}_{1})-2( x_{1} x_{5}+x_{2} x_{6})-\beta x_{3} (x^{2}_{3} + x^{2}_{4}),  \\
    \dot{x}_{4}=\alpha x_{4}- 2 x_{1} x_{2} + 2 ( x_{1} x_{6}- x_{2} x_{5})-\beta x_{4} (x^{2}_{3} + x^{2}_{4}),  \\
    \dot{x}_{5}=\gamma x_{5}-3 ( x_{1} x_{3}- x_{2} x_{4})-\beta x_{5} (x^{2}_{5} + x^{2}_{6}),  \\
     \dot{x}_{6}=\gamma x_{6}-3 ( x_{1} x_{4}+ x_{2} x_{3})-\beta x_{6} (x^{2}_{5} + x^{2}_{6}).  \\
  \end{array}
\right.
\end{equation}
Equating all equations in \eqref{Rabinovich:real:sys} with zero, we get $S_0=(0,0,0,0,0,0)$ is the only equilibrium point. The characteristic polynomial of the Jacobian matrix at $S_0$ is:
\begin{equation}\label{charac:equation}
  (\upsilon+\lambda)^{2}(\alpha-\lambda)^{2}(\gamma-\lambda)^{2}=0.
\end{equation}
For $\upsilon>0$, $\alpha<0$, $\gamma<0$, all eigenvalues of the characteristic polynomial \eqref{charac:equation} are negative; thus, the equilibrium $S_{0}$ is stable and otherwise is unstable.
\par For $\upsilon=-0.03$, $\alpha=0.5$, $\beta=0.001$  $\gamma=0.11$, $S_{0}$ is unstable and system \eqref{Rabinovich:real:sys} exhibits a hyperchaotic self-exited attractor as shown in Fig~\ref{fig:adaptive-self-excited}.
Using the following adaptive algorithm \cite{Kuznetsov-Leonov-2018}, it is possible to estimate the corresponding finite-time local Lyapunov exponents on the time interval $[0, 100]$ and initial point $(0.01,0.01,0.01,0.01,0.01,0.01)$ : $LE_{1} =  2.5124, LE_{2} = 0.1533, LE_{3} = 0.0571, LE_{4} = -0.0089, LE_{5} = -0.4718$, $LE_{6} = -2.5927$ (see Fig.~\ref{fig:adaptive-finit-Lyupunov}) and the finite-time local Lyapunov dimension is $LD=5.8648$.
\begin{figure}[!ht]
  \centering
  \includegraphics[width=\columnwidth]{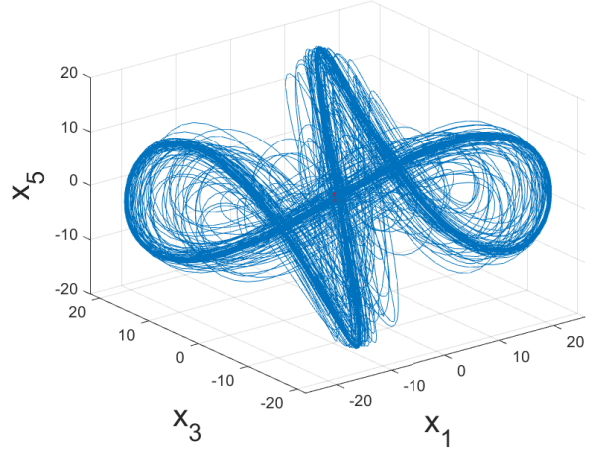}\\
  \caption{Visualization in projection $(x_1,x_3,x_5)$ of hyperchaotic  self-exited attractor of system \eqref{Rabinovich:real:sys} with $\upsilon=-0.03$, $\alpha=0.5$, $\beta=0.001$  $\gamma=0.11$ (self-excited with respect to
 the unstable zero equilibrium $S_0$).}
   \label{fig:adaptive-self-excited}
\end{figure}
\begin{figure}[!ht]
  \centering
  \includegraphics[width=\columnwidth]{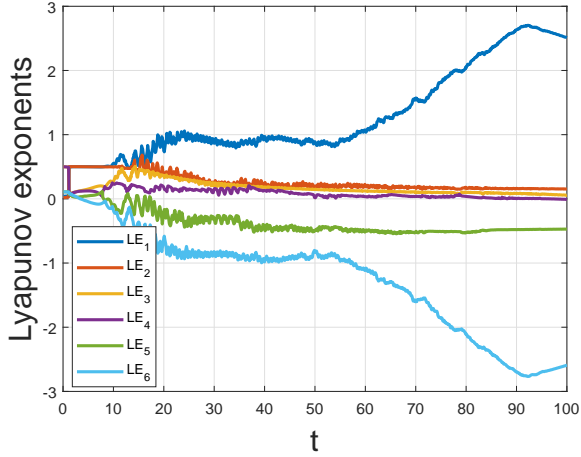}\\
  \caption{Finite-time local Lyapunov exponents on the time interval $[0, 100]$ of system \eqref{Rabinovich:real:sys} with $\upsilon=-0.03$, $\alpha=0.5$, $\beta=0.001$  $\gamma=0.11$.}
   \label{fig:adaptive-finit-Lyupunov}
\end{figure}

Since, currently, there are no results about dissipativity in the sense of Levinson and any criteria of global stability (see e.g. \cite{Leonov-Kuznetsov-2015}) for system \eqref{Rabinovich:sys}, the question of the existence of hidden attractors in system \eqref{Rabinovich:sys} is still open.
\par Further, we will use complex-valued Rabinovich system \eqref{Rabinovich:sys} for implementation of the new adaptive synchronization scheme with unknown parameters described above (see Eqs. \eqref{adaptive:drive:sys}-\eqref{adaptive:response:sys}, \eqref{adaptive:control:sys}). System \eqref{Rabinovich:sys} is not an artificial (man-made) one, it has a strong physical meaning and possesses a rich dynamics with possible chaotic or hyperchaotic attractors, which may be beneficial in the problems of synchronization and secure communication. Moreover, the structure of system \eqref{Rabinovich:sys} fits the design of the proposed synchronization scheme (see Eqs. \eqref{adaptive:drive:sys}-\eqref{adaptive:response:sys}) and is applicable to verify its efficiency. However, system \eqref{Rabinovich:sys} is considered here just as an example, and other complex-valued systems can be similarly studied.

\section{Adaptive synchronization of two identical hyperchaotic complex-valued Rabinovich systems with unknown parameters }
In this section, we apply our scheme to achieve adaptive synchronization between two identical hyperchaotic complex-valued Rabinovich systems with unknown parameters. The drive and the response systems can be defined as follows:
\begin{equation}\label{adaptive:Rabinovich:drive:sys}
\left\{
  \begin{array}{ll}
    \dot{x}_{d}=-\upsilon x_{d} +y_{d} x_{d}^{\ast}+z_{d} y_{d}^{\ast},  \\
     \dot{y}_{d}=\alpha y_{d} -x_{d}^{2} +2 x_{d}^{\ast} z_{d}-\beta |y_{d}|^{2}y_{d},  \\
     \dot{z}_{d}=\gamma z_{d}- 3 x_{d} y_{d} -\beta |z_{d}|^{2} z_{d},
  \end{array}
\right.
\end{equation}
and
\begin{equation}\label{adaptive:Rabinovich:resp:sys}
\left\{
  \begin{array}{ll}
    \dot{x}_{r}=-\hat{\upsilon} x_{r} +y_{r} x_{r}^{\ast}+z_{r} y_{r}^{\ast}+\theta_{1},  \\
     \dot{y}_{r}=\hat{\alpha} y_{r} -x_{r}^{2} +2 x_{r}^{\ast} z_{r}-\hat{\beta} |y_{r}|^{2}y_{r}+\theta_{2},  \\
     \dot{z}_{r}=\hat{\gamma} z_{r}- 3 x_{r} y_{r} -\hat{\beta} |z_{r}|^{2} z_{r}+\theta_{3},
  \end{array}
\right.
\end{equation}
where $\theta_{1}, \theta_{2}, \theta_{3}$ are complex control functions. Here we use
the subscript $d$ and $r$ for the drive and the response systems,
respectively.
\par Remark that the complex-valued Rabinovich systems \eqref{adaptive:Rabinovich:drive:sys} and \eqref{adaptive:Rabinovich:resp:sys} can't be written in the form \eqref{matrix:sys} to attain adaptive synchronization with entirely unknown parameters. However, it is possible to write the drive system \eqref{adaptive:Rabinovich:drive:sys} in the form \eqref{matrix:general:sys} such that: $\textsf{x}=(x_{d},y_{d},z_{d})^{T}=(x_{1}+i x_{2},x_{3}+i x_{4},x_{5}+i x_{6})^{T}$, $A=(\upsilon,\alpha,\gamma)^{T}$, $B=(0,\beta,\beta)^{T}$,
\begin{equation*}
F(\textsf{x})=\left(
            \begin{array}{ccc}
              -x_{d} & 0 & 0 \\
              0 & y_{d} & 0 \\
              0 & 0 & z_{d} \\
            \end{array}
          \right),
          G(\textsf{x})=\left(
       \begin{array}{c}
         y_{d} x_{d}^{\ast}+z_{d} y_{d}^{\ast} \\
         -x_{d}^{2} +2 x_{d}^{\ast} z_{d} \\
         - 3 x_{d} y_{d}  \\
       \end{array}
     \right),
\end{equation*}
\begin{equation*}
H(\textsf{x})=\left(
            \begin{array}{ccc}
              0 & 0 & 0 \\
              0 &-|y_{d}|^{2}y_{d} & 0 \\
              0 & 0 & -|z_{d}|^{2} z_{d} \\
            \end{array}
          \right).
\end{equation*}
In the same way, we can write the response system \eqref{adaptive:Rabinovich:resp:sys} in the form \eqref{adaptive:response:sys} such that: $\textsf{y}=(x_{r},y_{r},z_{r})^{T}=(y_{1}+i y_{2},y_{3}+i y_{4},y_{5}+i y_{6})^{T}$, $\hat{A}=(\hat{\upsilon},\hat{\alpha},\hat{\gamma})^{T}$, $B=(0,\hat{\beta},\hat{\beta})^{T}$, $\theta=(\theta_{2},\theta_{2},\theta_{3})^{T}$,
\begin{equation*}
F(\textsf{y})=\left(
            \begin{array}{ccc}
              -x_{r} & 0 & 0 \\
              0 & y_{r} & 0 \\
              0 & 0 & z_{r} \\
            \end{array}
          \right),
          G(\textsf{y})=\left(
       \begin{array}{c}
         y_{r} x_{r}^{\ast}+z_{r} y_{r}^{\ast} \\
         -x_{r}^{2} +2 x_{r}^{\ast} z_{r} \\
         - 3 x_{r} y_{r}  \\
       \end{array}
     \right),
\end{equation*}
\begin{equation*}
H(\textsf{y})=\left(
            \begin{array}{ccc}
              0 & 0 & 0 \\
              0 &-|y_{r}|^{2}y_{r} & 0 \\
              0 & 0 & -|z_{r}|^{2} z_{r} \\
            \end{array}
          \right).
\end{equation*}

Using Theorem \ref{theorem1:adaptive}, the control functions \eqref{adaptive:control:sys} can be written as follows:
\begin{multline}\label{adaptive:control:equ}
\theta(\textsf{x},\textsf{y})=\bigg(
 \upsilon(x_{r}-x_{d})+(y_{d} x_{d}^{\ast}+z_{d} y_{d}^{\ast}-y_{r} x_{r}^{\ast}-z_{r} y_{r}^{\ast})+\varphi_{1},
  \\
  \alpha(y_{d}-y_{r})-(x_{d}^{2} - 2 x_{d}^{\ast} z_{d}+\beta |y_{d}|^{2}y_{d}-x_{r}^{2} +2 x_{r}^{\ast} z_{r}\!-\!\beta |y_{r}|^{2}y_{r})+\varphi_{2}, \\
  \gamma(z_{d}-z_{r})-(3 x_{d} y_{d} +\beta |z_{d}|^{2} z_{d\tau}-3 x_{r} y_{r} -\beta |z_{r}|^{2} z_{r})+\varphi_{3}  \bigg)^{T},
\end{multline}
where $\varphi_{1}=-k_{11}(e_{1}+ie_{2})-k_{21}(e_{A_{1}}+ie_{A_{2}})-k_{31}(e_{B_{1}}+ie_{B_{2}})$, $\varphi_{2}=-k_{12}(e_{3}+ie_{4})-k_{22}(e_{A_{3}}+ie_{A_{4}})-k_{32}(e_{B_{3}}+ie_{B_{4}})$ and $\varphi_{3}=-k_{13}(e_{5}+ie_{6})-k_{23}(e_{A_{5}}+ie_{A_{6}})-k_{33}(e_{B_{5}}+ie_{B_{6}})$.
\par Consider the following values of system parameters: $\upsilon=-0.03$, $\alpha=0.5$, $\beta=0.001$  $\gamma=0.11$. For these parameters, system \eqref{adaptive:Rabinovich:drive:sys} exhibits self-excited hyperchaotic attractor (see Fig.~\ref{fig:adaptive-self-excited}). In order to establish the validity of the controller \eqref{adaptive:control:equ}, we choose the control gain matrices as follows: $K_{1}=diag(66,55,77)$, $K_{2}=diag(13,12,15)$, $K_{3}=diag(15,15,59)$. Derive system\eqref{adaptive:Rabinovich:drive:sys} and response system\eqref{adaptive:Rabinovich:resp:sys} with the controllers \eqref{adaptive:control:equ} are solved numerically. Fig.~\ref{fig:adaptive:sych} shows the solutions of \eqref{adaptive:Rabinovich:drive:sys} and \eqref{adaptive:Rabinovich:resp:sys} with the initial conditions $x(0)=(1+i,1+i,1+i)^{T}$ and $y(0)=(3+3i,3+3i,3+3i)^{T}$. In Fig.~\ref{fig:adaptive-error-e}, one sees that adaptive synchronization error $e(t)$ converges to zero and synchronization is achieved after a short time $t = T_{s} (T_{s} = 13s)$. From Figs~\ref{fig:adaptive-error-eA} and \ref{fig:adaptive-error-eB}, it is evident that the parameters modulation errors $e_{A}$ and $e_{B}$ approach zero. Thus, the estimation of unknown parameters $(\hat{\upsilon},\hat{\alpha},\hat{\beta},\hat{\gamma})$ converge to $(\upsilon,\alpha,\beta,\gamma)=(-0.03,0.5,0.001,0.11)$ (see Fig.~\ref{fig:parameters-modulation}).

\begin{figure*}[!ht]
 \centering
\subfloat[ {\scriptsize}
 ] {
 \label{fig:adaptive-synchx1x4x6}
 \includegraphics[width=0.45\textwidth]{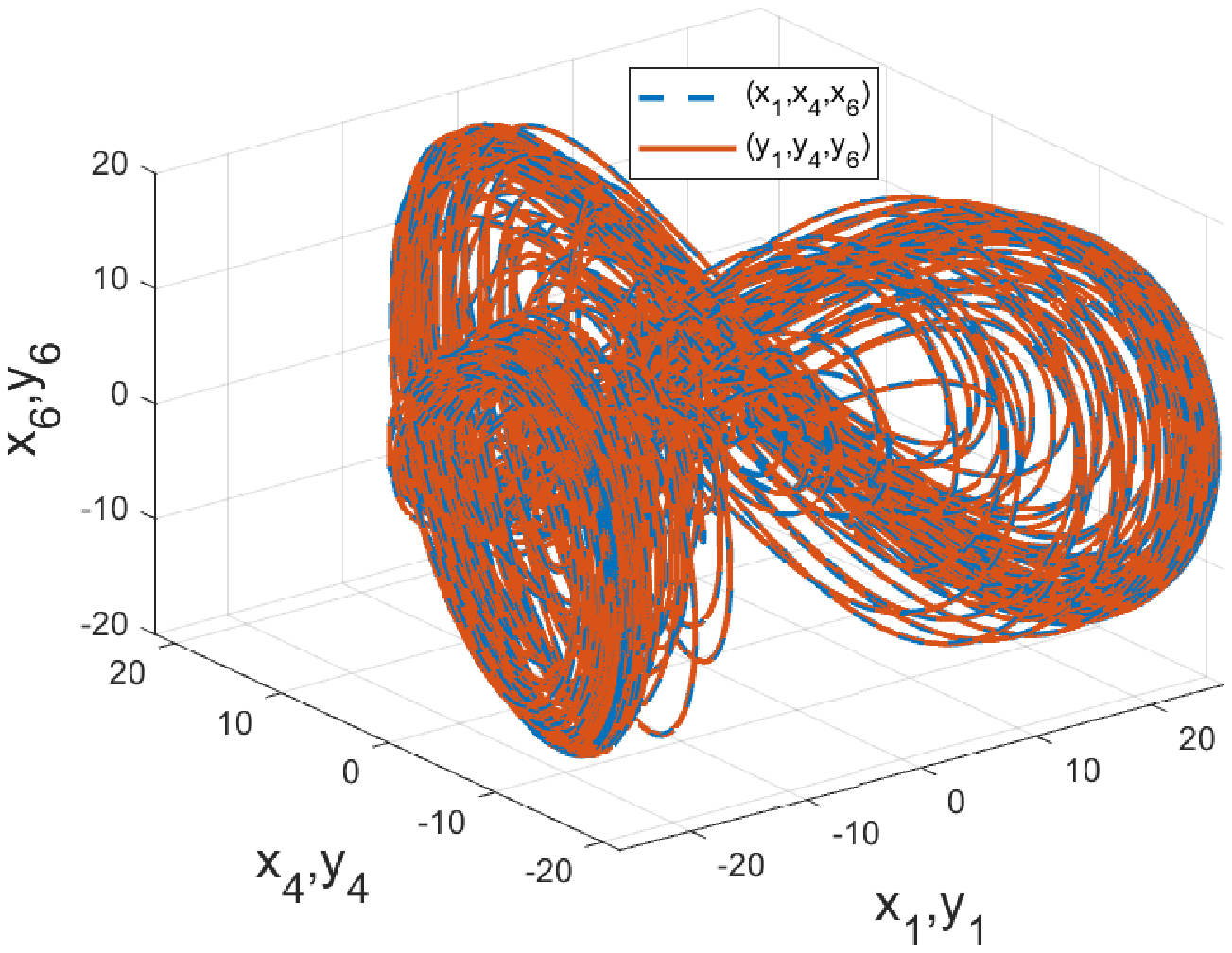}
 }~
 \subfloat[ {\scriptsize}
 ] {
 \label{fig:adaptive-synchx2x3x5}
 \includegraphics[width=0.45\textwidth]{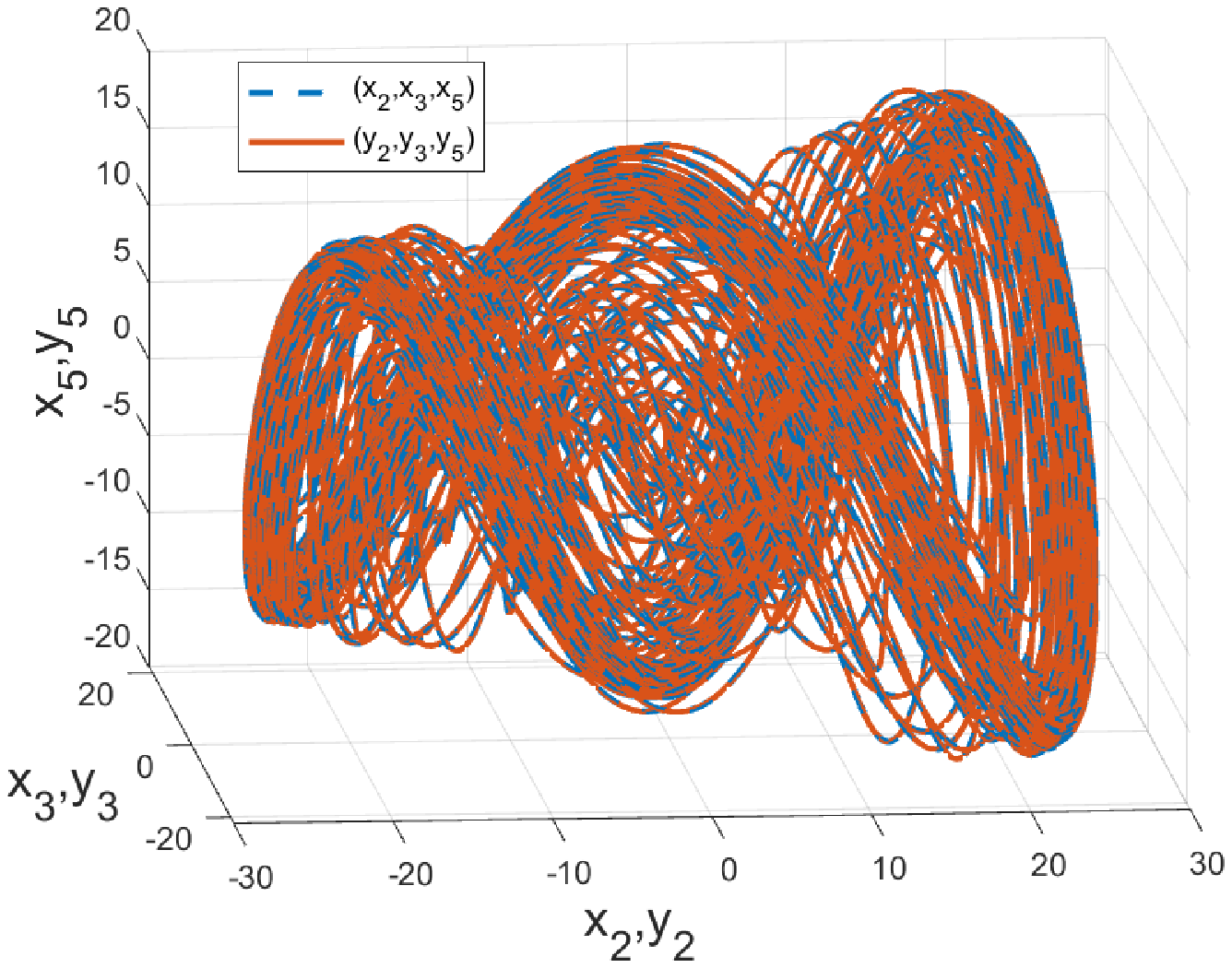}
 }

 \caption{Adaptive synchronization of self-excited hyperchaotic attractors of systems \eqref{adaptive:Rabinovich:drive:sys} and \eqref{adaptive:Rabinovich:resp:sys}, for $\upsilon=-0.03$, $\alpha=0.5$, $\beta=0.001$  $\gamma=0.11$.
 }
 \label{fig:adaptive:sych}
\end{figure*}
\begin{figure*}[!ht]
 \centering
 \subfloat[
 {\scriptsize The error between  the derive \eqref{adaptive:Rabinovich:drive:sys} and the response \eqref{adaptive:Rabinovich:resp:sys} systems described by the solutions of system \eqref{adaptive:error:derivat1:sys}.}
 ] {
 \label{fig:adaptive-error-e}
 \includegraphics[width=0.45\textwidth]{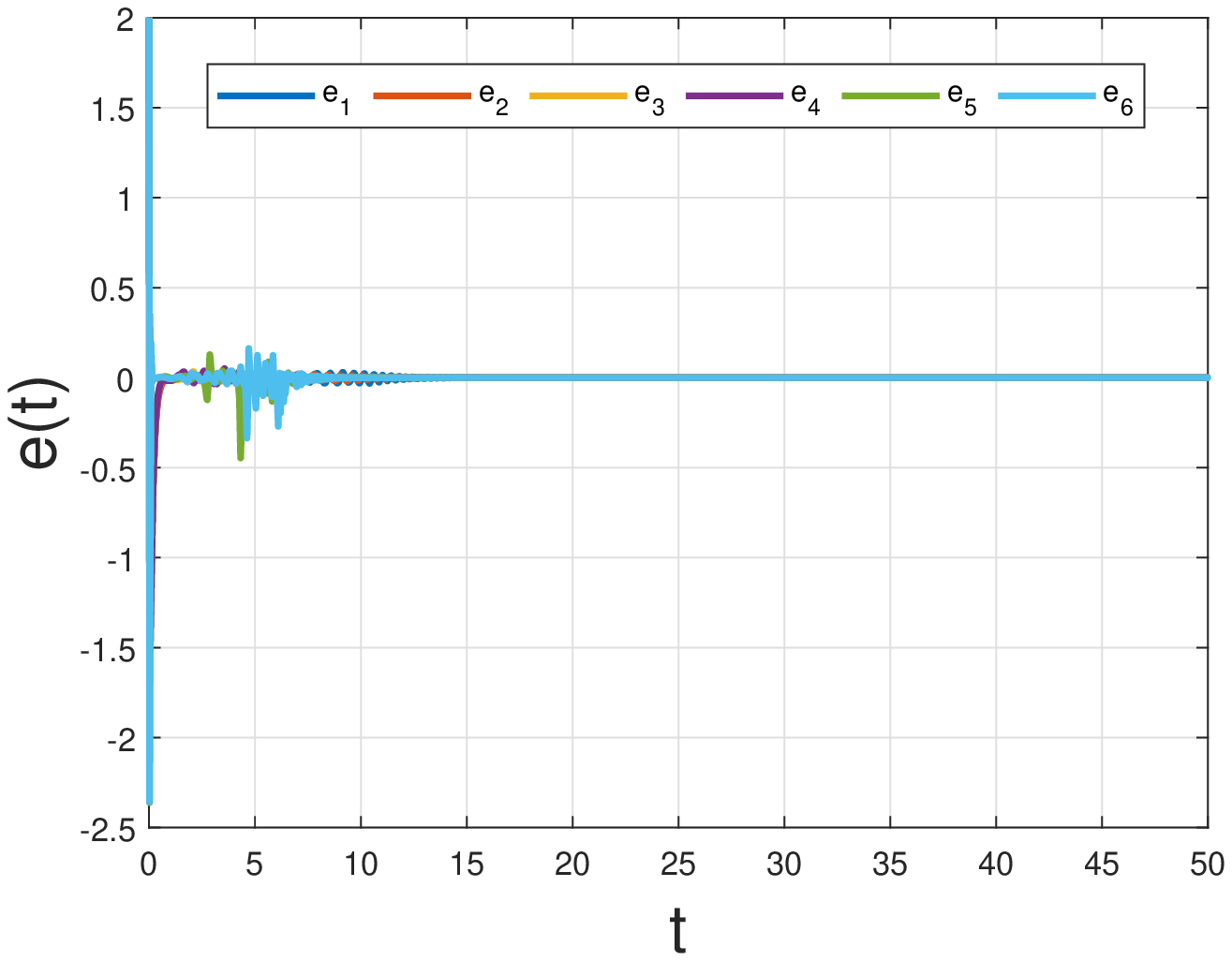}
 }~
 \subfloat[
 {\scriptsize The error of the parameters estimation $\hat{A}$ described by the solutions of system \eqref{param:modulation:A}.}
 ] {
 \label{fig:adaptive-error-eA}
 \includegraphics[width=0.45\textwidth]{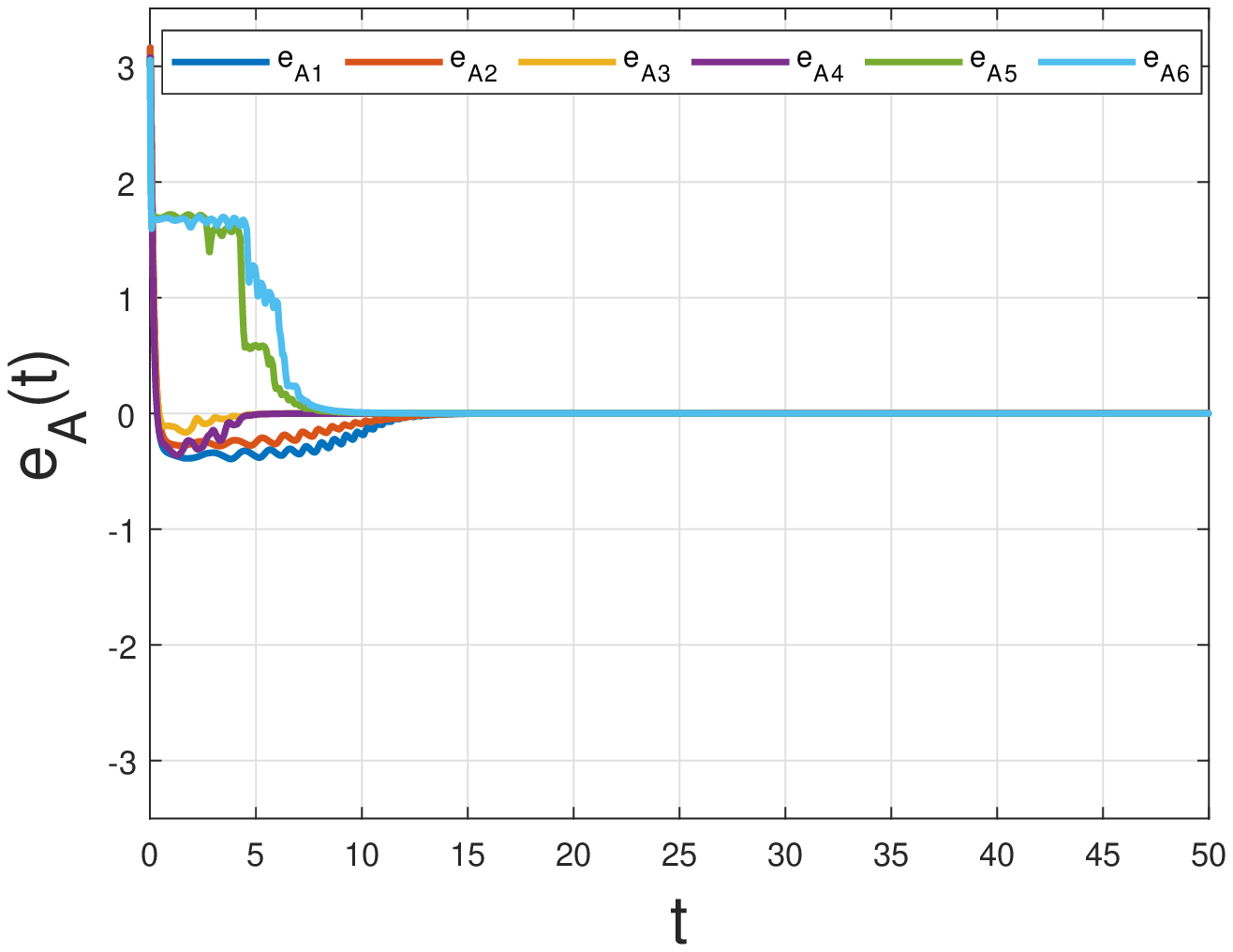}
 }
 \\
 \subfloat[
 {\scriptsize The error of the parameters estimation $\hat{B}$ described by the solutions of system \eqref{param:modulation:B}.}
 ] {
 \label{fig:adaptive-error-eB}
 \includegraphics[width=0.45\textwidth]{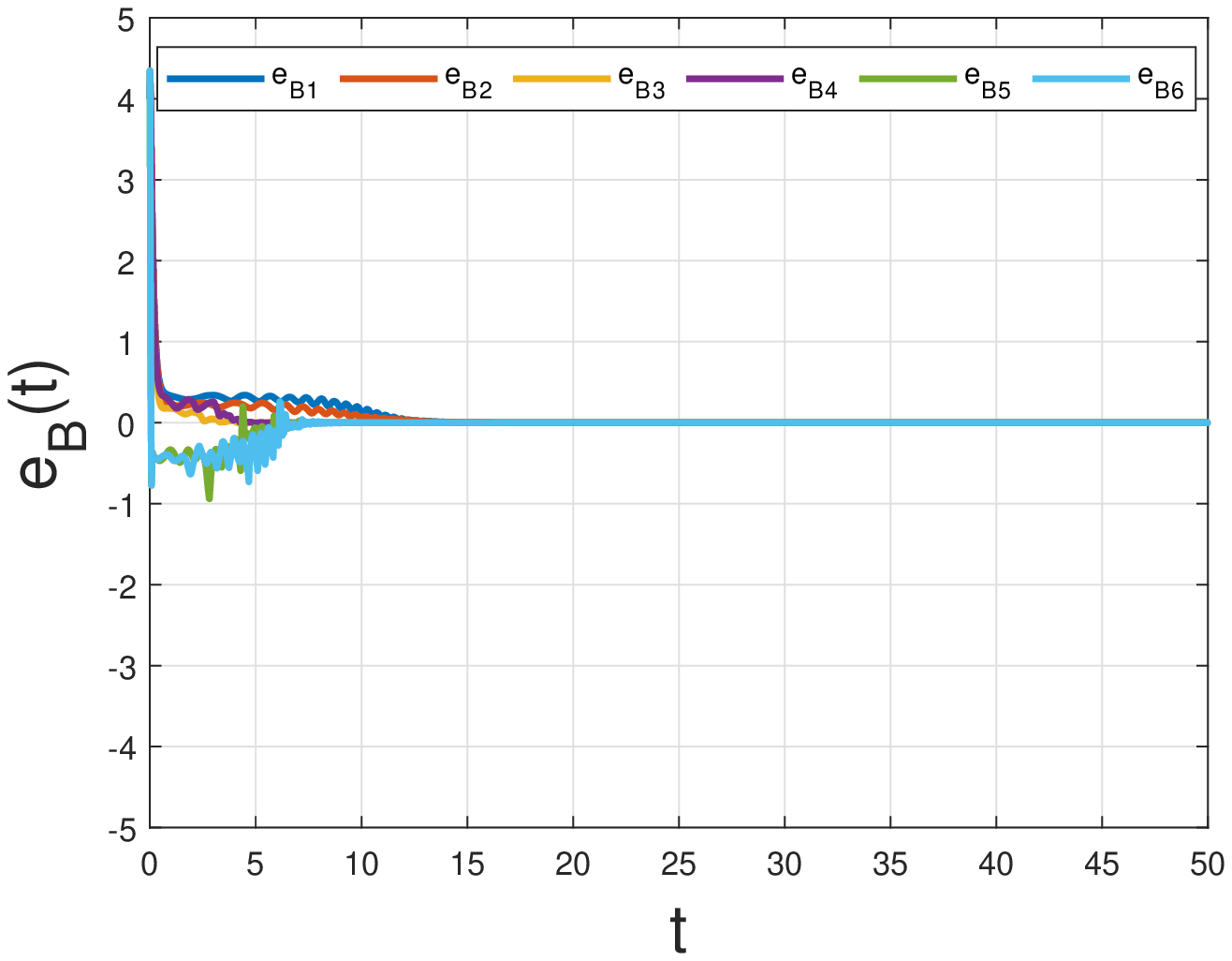}
 }
 \caption{Evolution of errors over the time.
 }
 \label{fig:adaptive:error}
\end{figure*}
\begin{figure*}[!ht]
  \centering
  \includegraphics[width=15cm]{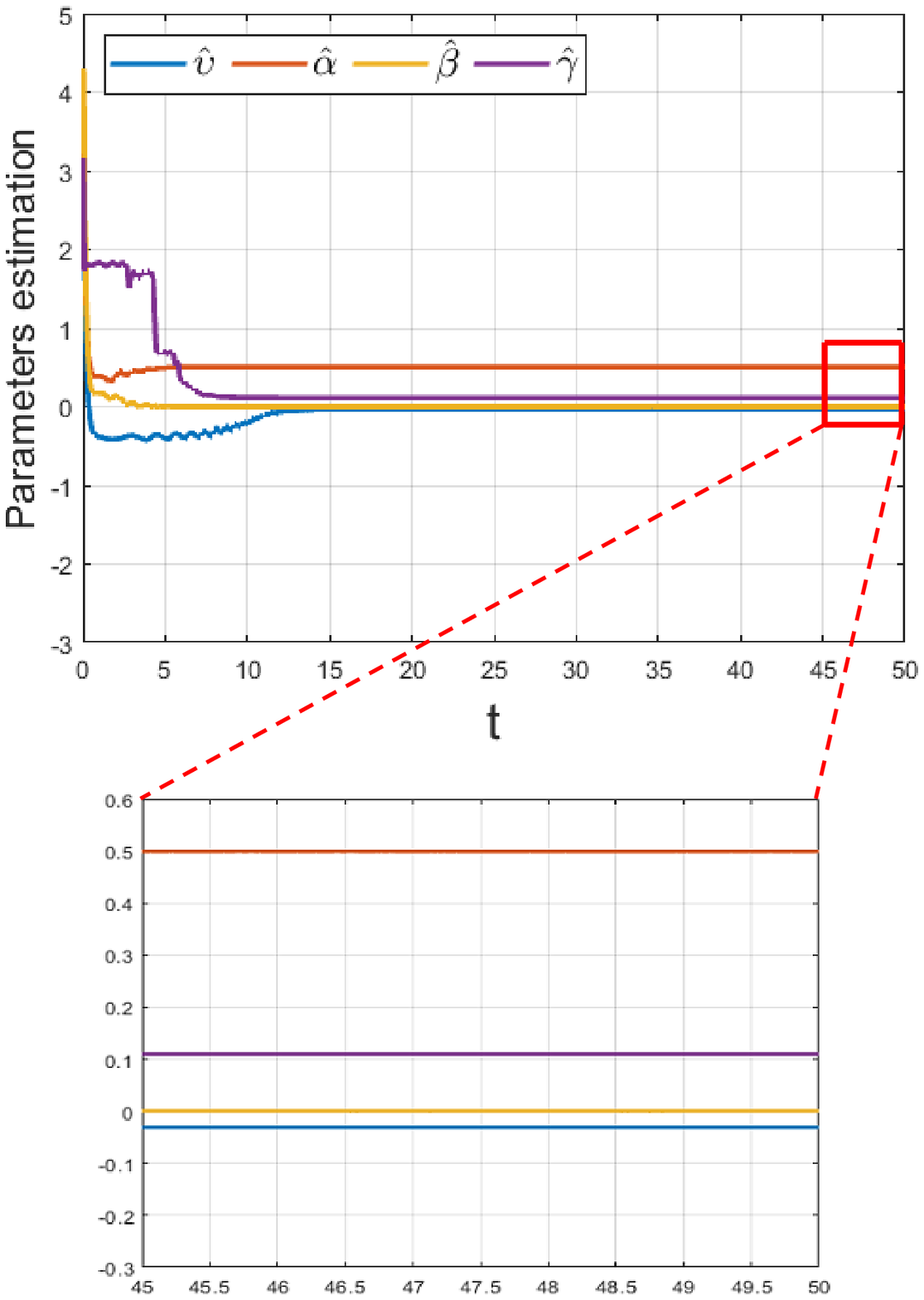}\\
  \vspace{-4.5cm}
  \caption{Estimation of unknown parameters $(\hat{\upsilon},\hat{\alpha},\hat{\beta},\hat{\gamma})$.}
   \label{fig:parameters-modulation}
\end{figure*}

\section{Application to secure communications}
In this section, we propose a novel scheme for secure communications based on the adaptive synchronization of hyperchaotic systems with uncertain parameters. In general, existing communication schemes are designed by injection of the message into chaotic states (chaotic masking) \cite{Liao-Tsai-2000,Li-Xu-2004,Wu-Wang-2011,Mahmoud-Mahmoud-Arafa-2016}, or parameters of the transmitter system (parameters modulation) \cite{He-Cai-2016,Mahmoud-Farghaly-2018} of the transmitter. The new scheme is built to split the message and inject some bit of information signal into parameter modulation and the other bit into the transmitter states, which improve the security of communication system and complicates decryption task by meddlers. The diagram of the scheme is shown in Fig.~\ref{fig:secure1}.
\begin{figure*}
  \centering
  \includegraphics[width=17cm]{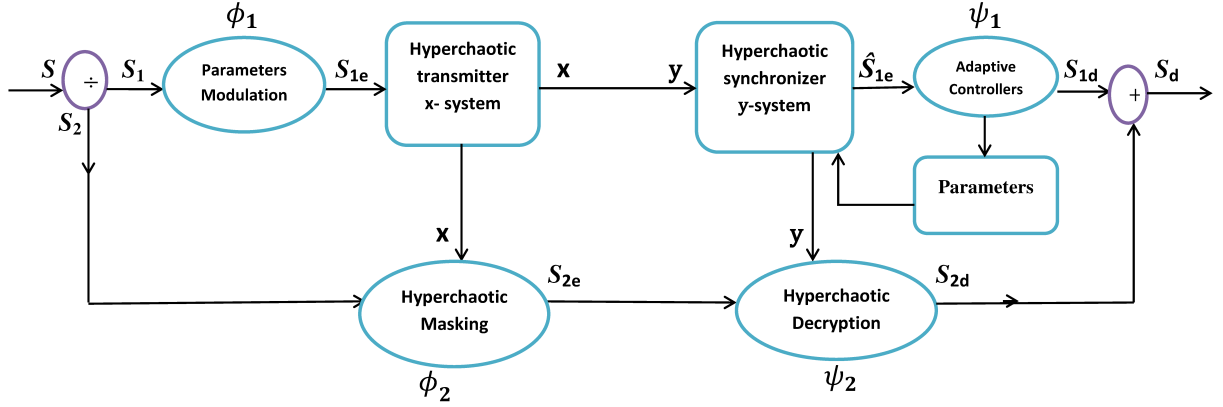}\\
 \vspace{-5.5cm} \caption{Hyperchaotic secure-communication scheme.}
   \label{fig:secure1}
\end{figure*}
The scheme consists of the following elements:
\begin{itemize}
  \item Hyperchaotic transmitter and receiver systems, which generate the pseudo-states variables $\textsf{x}(t)\in \mathbb{C}^{n}$ and $\textsf{y}(t)\in \mathbb{C}^{n}$, respectively.
  \item Splitting block: the message signal $S(t)$ is divided into two vectors of bits $S_1$ and $S_2$ to distribute it over two channels.
   \item Parameters modulation block: the first part of the message $S_1(t)$ by a continuous invertible function $\phi_1$ is modulated into the parameters of the transmitter.
  \item Hyperchaotic masking block: the second part of the message signal $S_2(t)$ to be encrypted is injected into a non-linear function $ \phi_2 : \mathbb{R}^{n} \times \mathbb{R}\longrightarrow \mathbb{R} $ that is continuous in its first argument $x \in \mathbb{R}^{n}$ and satisfies the following property: for all fixed pair of $(\textsf{x},S_2)\in \mathbb{R}^{n} \times \mathbb{R} $, there is a unique function $ \psi_2 : \mathbb{R}^{n} \times \mathbb{R}\longrightarrow \mathbb{R}$ that is continuous in its first argument $\textsf{x} \in \mathbb{R}^{n}$, such that $\psi_2(\textsf{x}, \phi_2(\textsf{x},S_2))=S_2$. The encryption function $\phi_2$ is built in terms of the hyperchaotic states. The result is a signal $S_{2e}(t)$ containing the message part that is sent over one of the channels.
  \item Channels: two channels transmit the hyperchaotic-state signals which contain the parameters modulation and the encrypted information-bearing signal $S_{2e}$.
  \item Synchronization block: for the receiver side of the communication system, a synchronization block is implemented to retrieve the hyperchaotic pseudo-state signals and provide the necessary information for the decryption.
  \item Adaptive controllers block: in the receiver side the adaptive controllers are built to trace the parameters of the transmitter system. After synchronization is realized the decryption function $\psi_1$ can be utilized to recover the first part of the transmitted message $S_{1d}$.
  \item Hyperchaotic decryption block: the masked message $S_{2e}(t)$ is decrypted by using a nonlinear
function $S_{2d}(t)=\psi_{2}(\textsf{y}(t),S_{1e}(t))$. In this case, $\textsf{y}(t)$ after a specific time $t=T_{s}$ is the estimation of the hyperchaotic state $\textsf{x}(t)$ generated by the synchronization block.
\item Gathering block: by combining the two information signals $S_{1d}$ and $S_{2d}$, we get the whole decrypted message $S_{d}$.
\end{itemize}
\par Unlike other schemes which could be inaccurate due to lack
of robustness and security (see e.g., \cite{Alvarez-Montoya-2004,JinFeng-JingBo-2008}), the proposed secure communication scheme is robust against the infiltration attempt by intruders because it must precisely identify the following information to decrypt the message successfully:\\
1. which part of the information bit is injected into the transmitter's parameters, and which part is inserted into hyperchaotic states of the transmitter;\\
2. which channel is used to transmit the transmitter's hyperchaotic signals with parameters modulation, and which one to send the hyperchaotic masked message part;\\
3. which parameters are utilized for parameters modulation of the transmitter;\\
4. on which basis of attractor, the hyperchaotic encryption was organized. For this task, the presence of multi-stability in the system and hidden attractors will significantly complicate decrypting;\\
5. the constructed functions for parameters modulation and hyperchaotic masking;\\
6. the adaptive laws of controllers (one of the most critical information to know).

\begin{remark}
The proposed secure communication scheme is built based on the technique of adaptive synchronization with unknown parameters, which is a tricky task for implementation and, in turn, leads to more security \cite{Mahmoud-2012,Xu-Zhou-2010,Liao-Lin-1999}.
\end{remark}
\begin{remark}
Using synchronization in communication systems is fundamental since it leads to recovering
basis function(s) in coherent correlation receivers (see e.g., \cite{Kolumban-Kennedy-1997,Kolumban-Kennedy-1998}).
\end{remark}
\par To illustrate secure communication scheme the complex-valued Rabinovich system is considered and the numerical results are executed in MATLAB. The transmitter is the implementation of \eqref{adaptive:drive:sys} which generates the states $(x_{1}+ix_{2},x_{3}+ix_{4},x_{5}+ix_{6})^{T}$ and the receiver is the implementation of \eqref{adaptive:response:sys} which generates the states $(y_{1}+iy_{2},y_{3}+iy_{4},y_{5}+iy_{6})^{T}$. As indicated in Fig.~\ref{fig:adaptive:sych}, derive system \eqref{adaptive:drive:sys} and response system \eqref{adaptive:response:sys} for $\upsilon=-0.03$, $\alpha=0.5$, $\beta=0.001$  $\gamma=0.11$ exhibit hyperchaotic self-excited attractors. The adaptive synchronization is achieved after $t=T_s (T_s=13s)$ if the control functions are chosen in the form \eqref{adaptive:control:equ} (see Fig.~\ref{fig:adaptive-error-e}). To demonstrate the robustness of our scheme, the message $S(t)$ is selected in two forms of plain text and gray image with diverse scales of white Gaussian noise.
\subsection{Single parameter modulation and hyperchaotic masking for a plain text encryption  }

The first type of the message $S(t)$ is a plain text which may contain alphabets, symbols, numbers, and spaces. To convert this text message into a vector of numbers, we use the package "double" in MATLAB software. Suppose $S=[s_{1},s_{2},..,s_{m},s_{m+1},s_{m+2},...,s_{n}]$, according to the proposed scheme, this vector of numbers is split into two vectors $S_{1}=[s_{1},s_{2},..,s_{m}]$ and $S_{2}=[s_{m+1},s_{m+2},...,s_{n}]$. The first vector $S_{1}$ by the following parameters modulation function is injected into the parameters of the transmitter:
\begin{equation}\label{encryptin:func1}
  S_{1e}=\phi_{1}(\{A,B\},S_{1})=\frac{S_{1}}{10\,d}+\gamma,
\end{equation}
where $d=\max(S(j))-\min(S(j))$, $j=1,2,...,n$, $\gamma=0.11$. For the second part of the
 message, $S_{2}$ is injected into the hyperchaotic states of the transmitter system using the following function:
 \begin{equation}\label{encryptin:func2}
  S_{2e}=\phi_{2}(\textsf{x},S_{2})=x_{6}^{2}+(1+x^{2}_{6})S_{2}.
\end{equation}\\
The receiver can decode the message by using the following decryption functions:
\begin{equation}\label{decryptin:func1}
  S_{1d}=\psi_{1}(\{\hat{A},\hat{B}\},\hat{S}_{1e})=10(\hat{S}_{1e}-\hat{\gamma})d,
\end{equation}
where $\hat{\gamma}=0.11$.
\begin{equation}\label{decryptin:func2}
  S_{2d}=\psi_{2}(\textsf{y},S_{2e})=-\frac{y_{6}^{2}}{1+y^{2}_{6}}+\frac{S_{2e}}{1+y^{2}_{6}}.
\end{equation}
By gathering the vectors $S_{1d}$ and $S_{2d}$, we get the whole decrypted information signal $S_{d}$. Using the package "char" in MATLAB software, the receiver can convert these numbers back to a plain text. For example, suppose we want to send a part of this article, e.g., the abstract, as shown in Fig.~\ref{fig:adaptive_original1}. The encrypted message is depicted in Fig.~\ref{fig:adaptive_encrypted}, and one can check that the transmitted message is strongly coded. Fig.~\ref{fig:adaptive_original2} shows the decrypted message, which matches the original text.
\begin{figure}[!ht]
  \centering
  \includegraphics[width=\columnwidth]{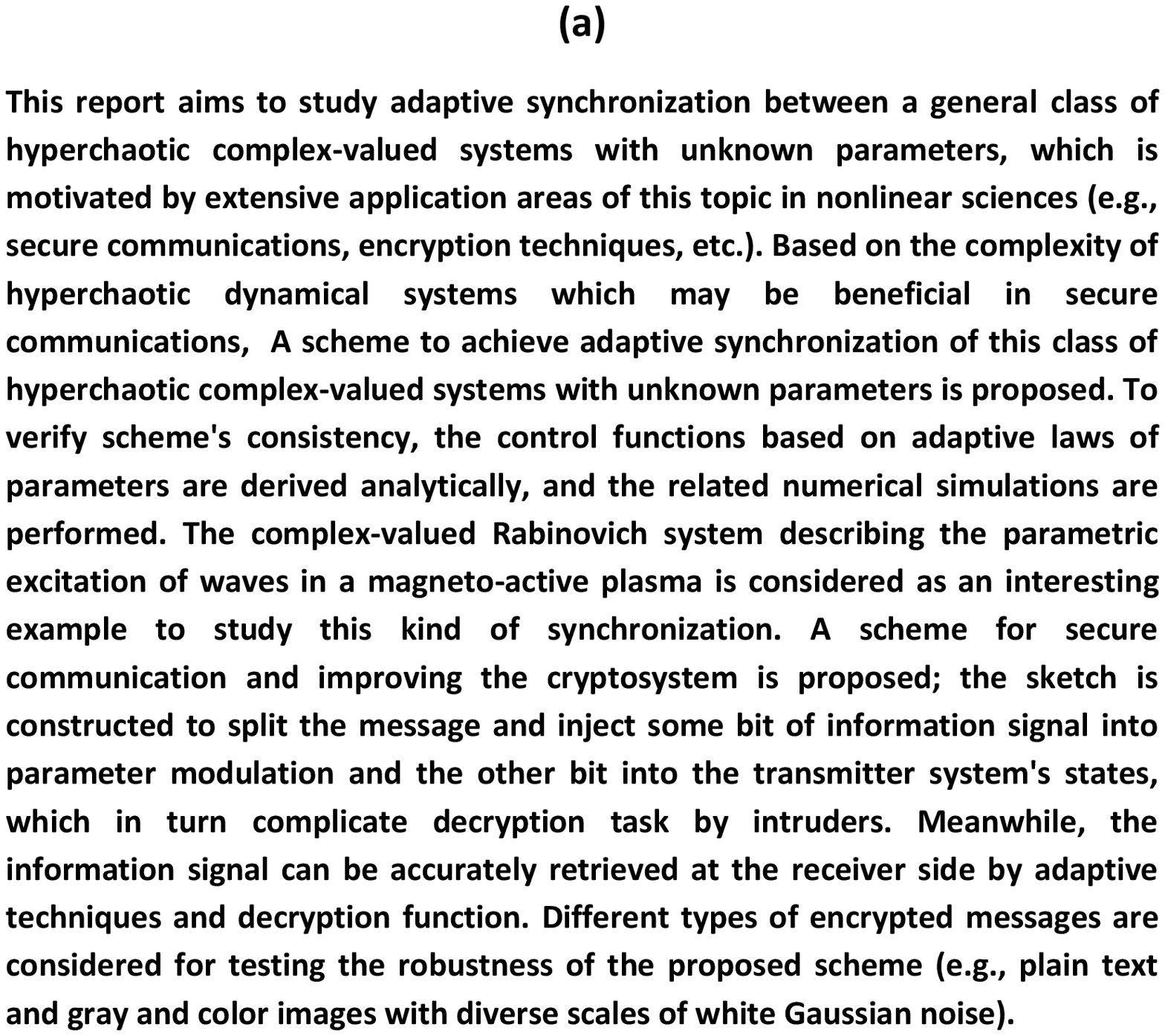}\\
  \vspace{-4cm}\caption{Original text.}
   \label{fig:adaptive_original1}
\end{figure}
\begin{figure}[!ht]
  \centering
  \includegraphics[width=\columnwidth]{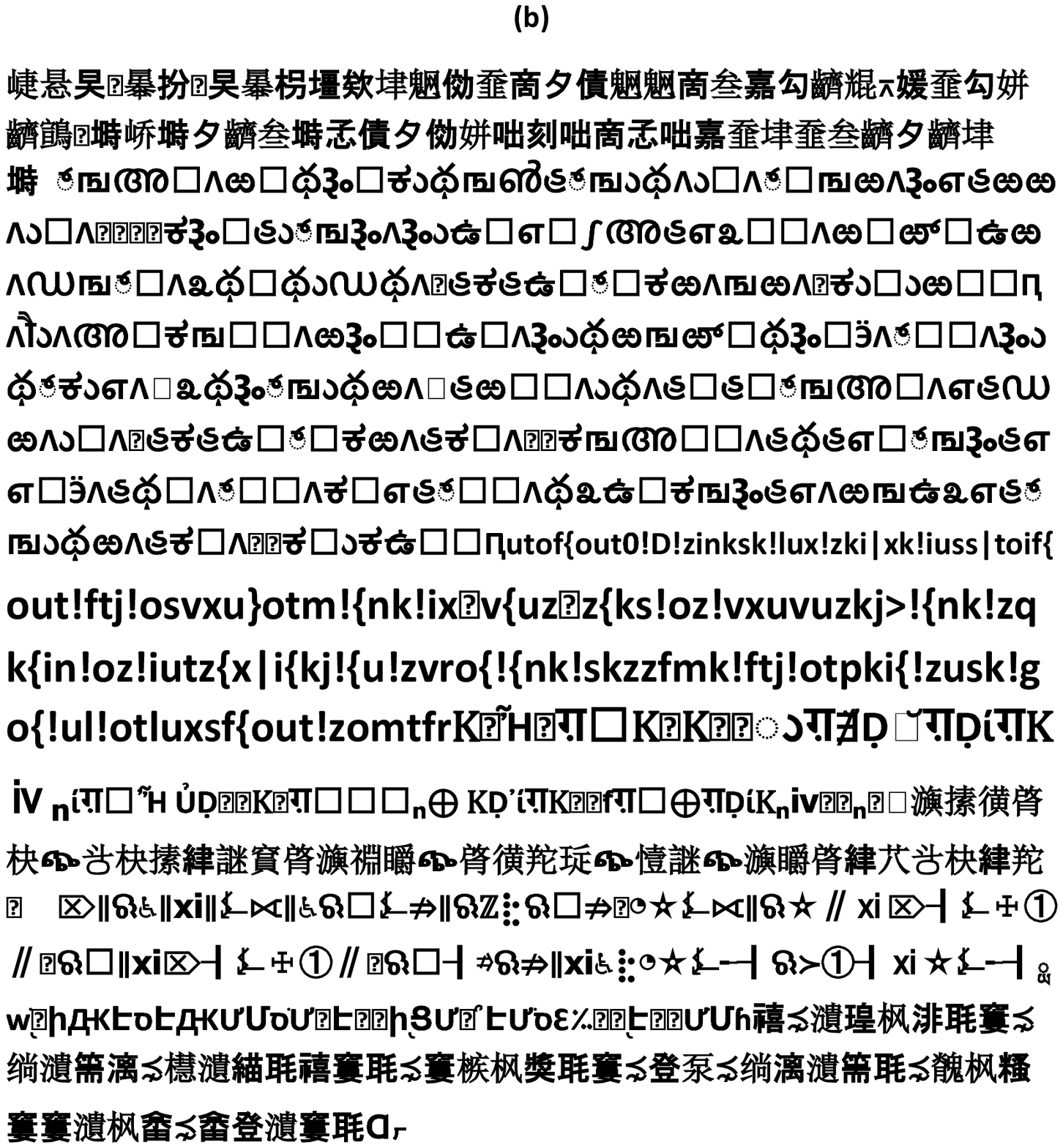}\\
 \vspace{-3cm} \caption{Encrypted text.}
   \label{fig:adaptive_encrypted}
\end{figure}

\begin{figure}[!ht]
  \centering
  \includegraphics[width=\columnwidth]{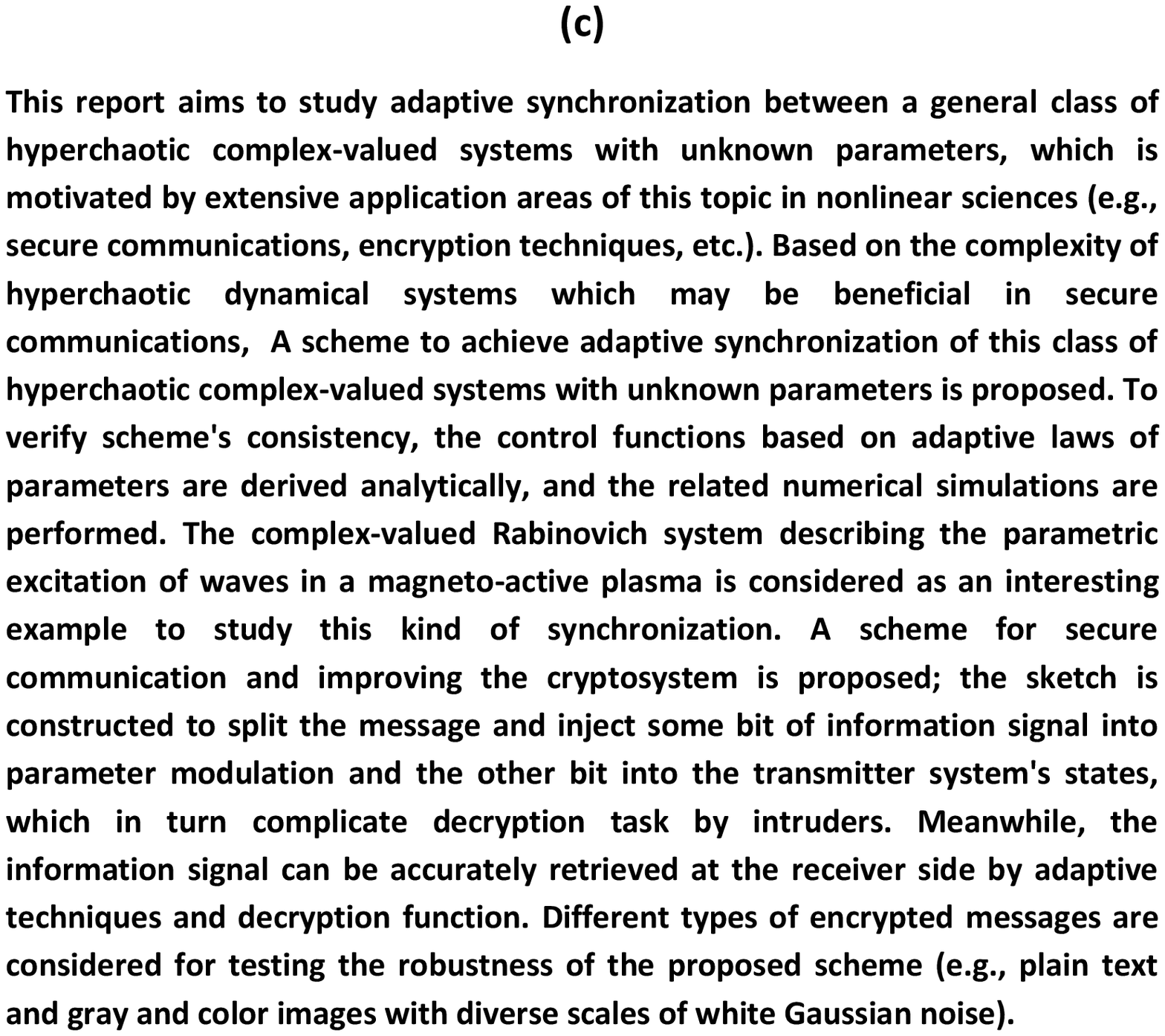}\\
  \vspace{-4cm}\caption{Decrypted text.}
   \label{fig:adaptive_original2}
\end{figure}
\subsection{Single parameter modulation and hyperchaotic masking for a grayscale image  }
Now we consider the second message in the form of grayscale image (cameraman.tif) with size $256\times256$. This image can be transformed into
$m\times n$ matrix of pixels as follows:
\begin{equation}\label{image:matrix}
  S=\left(
      \begin{array}{cccc}
        s_{11} & s_{12} & \cdots & s_{1n} \\
        s_{21} & s_{22} & \cdots & s_{2n} \\
        \vdots & \vdots & \ddots & \vdots \\
        s_{m1} & s_{m2} & \cdots & s_{mn} \\
      \end{array}
    \right),
\end{equation}
where $s(k,l)$ stands for the image value in pixel at the position $(k,l)$ where $k=1,2,...,m$, $l=1,2,...,n$. The matrix of pixels is converted into a 1-dimensional vector of integers between 0 and 255. Let $S=[s_{11},s_{21},...,s_{m1},s_{12},...,s_{m2},s_{1n},...,s_{mn}]=[s_{1},s_{2},...,s_{mn}]$. The last vector is divided into two vectors $S_{1}=[s_{1},s_{2},...,s_{k}]$ and $S_{2}=[s_{k+1},s_{k+2},...,s_{mn}]$. The first vector is injected into the transmitter system's parameters, while the second is injected into its hyperchaotic states. We use the same functions for parameter modulation, hyperchaotic masking, and decryption as in the first message. The transmitted and recovered grayscale images are shown in Figs.~\ref{fig:original_Image} and \ref{fig:decrypted_Image}.
The image histogram, which shows the distribution of intensities for original and decrypted images, are depicted in Figs.~\ref{fig:orignal_histogram}, and \ref{fig:decrypted_histogram}, respectively.
Through Figs. \ref{fig:decrypted_Image} and \ref{fig:decrypted_histogram}, one can observe that the original image (cameraman.tif) is accurately retrieved.
\begin{figure*}[!ht]
 \centering
 \subfloat[
 {\scriptsize Original image.}
 ] {
 \label{fig:original_Image}
 \includegraphics[width=0.45\textwidth]{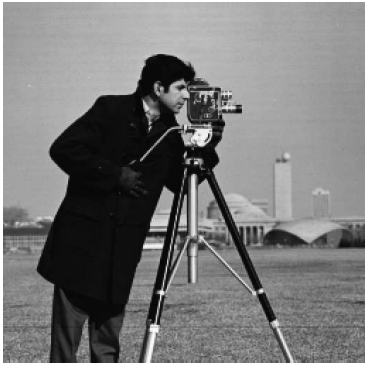}
 }~
 \subfloat[
 {\scriptsize Histogram of the original image.}
 ] {
 \label{fig:orignal_histogram}
 \includegraphics[width=0.45\textwidth]{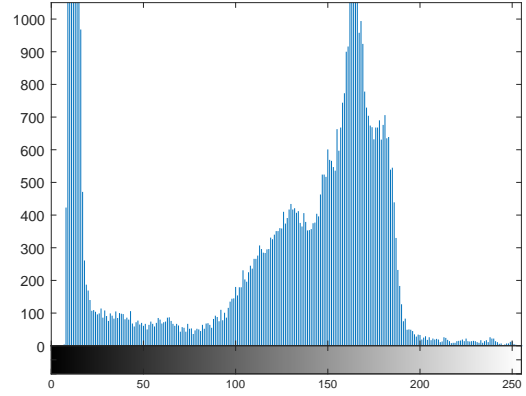}
 }
 \\
 \centering
 \subfloat[
 {\scriptsize Decrypted image.}
 ] {
 \label{fig:decrypted_Image}
 \includegraphics[width=0.45\textwidth]{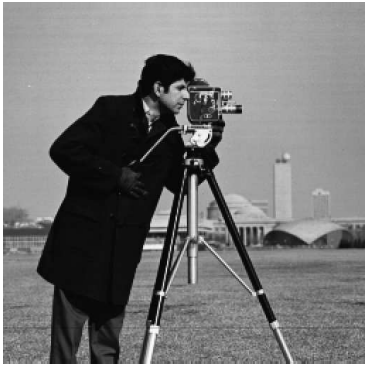}
 }~
 \subfloat[
 {\scriptsize Histogram of the decrypted image.}
 ] {
 \label{fig:decrypted_histogram}
 \includegraphics[width=0.45\textwidth]{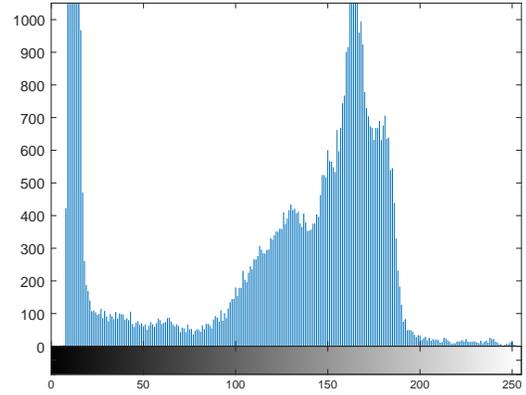}
 }
 \caption{Simulation results of grayscale image encryption using the complex-valued Rabinovich system.
 }
 \label{fig:adaptive:encrypted_decrypted}
\end{figure*}
\par To demonstrate the robustness of the secure communication scheme for image encryption, the white Gaussian noise with various scales is added to the grayscale image (cameraman.tif), and different tests are used to measure the quality of the retrieved image, e.g., peak signal-to-noise ratio (PSNR) and structural similarity image index (SSIM).
\subsubsection{Peak signal-to-noise ratio analysis}
To analyze the pixel distribution for the recovered image with respect to the original image, the peak signal-to-noise ratio (PSNR) is applied. The PSNR can be defined as the following \cite{Wang-Bovik-2004,Mahmoud-Farghaly-2018}:
\begin{equation}\label{PSNR}
P\!S\!N\!R(S,S_{d})=10\log_{10}\Bigg[\frac{(255)^{2}}{M\!S\!E(S,S_{d})}\Bigg],
\end{equation}
where MSE is the mean square error and is defined as follows:
\begin{equation}\label{MSE}
M\!S\!E(S,S_{d})=\frac{1}{m\times n}\sum_{k=1}^{m}\sum_{l=1}^{n}(s_{d}(k,l)-s(k,l))^{2},
\end{equation}
where $S$ and $S_{d}$ stand for the original and retrieved image, respectively.\\
Remark that a high value of PSNR refers to a close resemblance between the decrypted image and the original one.
\subsubsection{The structural similarity image index}
The second test considered to measure and analyze the similarity between the original and retrieved image is the structural similarity image index (SSIM) defined in the following form \cite{Wang-Bovik-2002}:
\begin{equation}\label{SSIM}
S\!S\!I\!M(S,S_{d})=\frac{(2\mu_{S}\mu_{S_{d}}+C_{1})(2\sigma_{SS_{d}}+C_{2})}{(\mu_{S}^{2}+\mu_{S_{d}}^{2}+C_{1})(\sigma_{S}^{2}+\sigma_{S_{d}}^{2}+C_{2})},
\end{equation}
where $\mu_{S}$ and $\mu_{S_{d}}$ are the average luminance value of original image $S$ and the decrypted image $S_{d}$, respectively;
$\sigma_{S}$ and $\sigma_{S_{d}}$ are the standard variances of $S$ and $S_{d}$, respectively; $\sigma_{SS_{d}}$ is the covariance between $S$ and $S_{d}$; $C_{1}$ and $C_{2}$ are small fixed positive constants in order to have the denominator not equal to zero. The estimate of the SSIM is always in the interval $[-1,1]$, and the strongest estimate 1 is realized if $S=S_{d}$.
In order to demonstrate the robustness of the proposed scheme to noise, the grayscale image (cameraman.tif) is transmitted from the transmitter to the receiver with various levels of white Gaussian noise. Fig.~\ref{fig:adaptive:noise} shows the original noise images and the retrieved noise ones. The estimate of the PSNR and SSIM for the retrieved grayscale images with the original ones are listed in Table 1.\\
Table 1. Estimation of PSNR and SSIM for grayscale cameraman.tif image.\\\\
\begin{tabular}{c| c| c}
  \hline
  Gray image & PSNR & SSIM \\
  \hline
  Cameraman (Gaussian noise 0.03) & 77.9705 & 0.9999 \\
   Cameraman (Gaussian noise 0.07) & 77.3193 & 0.9999 \\
   Cameraman (Gaussian noise 0.1) & 77.4875 & 0.9999 \\
  \hline
\end{tabular}
\begin{figure*}[!ht]
 \centering
 \subfloat[
 {\scriptsize }
 ] {
 \label{fig:noise3}
 \includegraphics[width=0.35\textwidth]{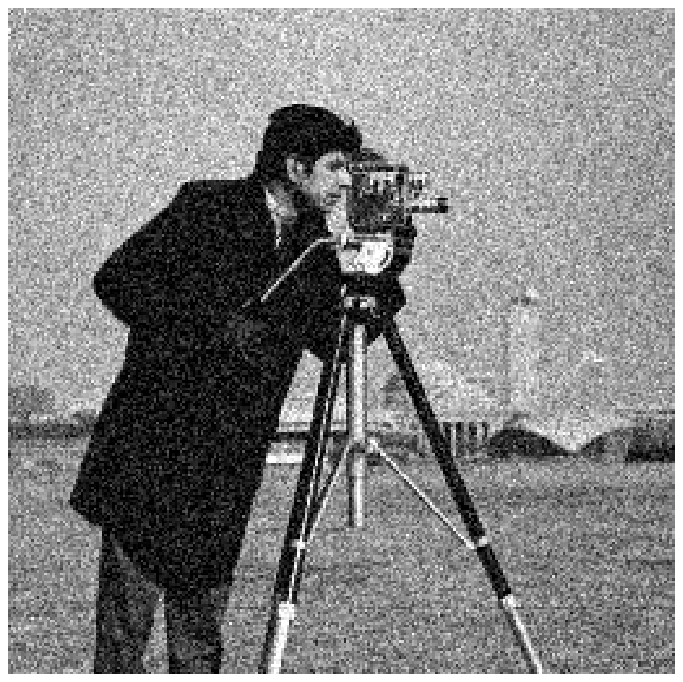}
 }~
 \subfloat[
 {\scriptsize }
 ] {
 \label{fig:noise7}
 \includegraphics[width=0.35\textwidth]{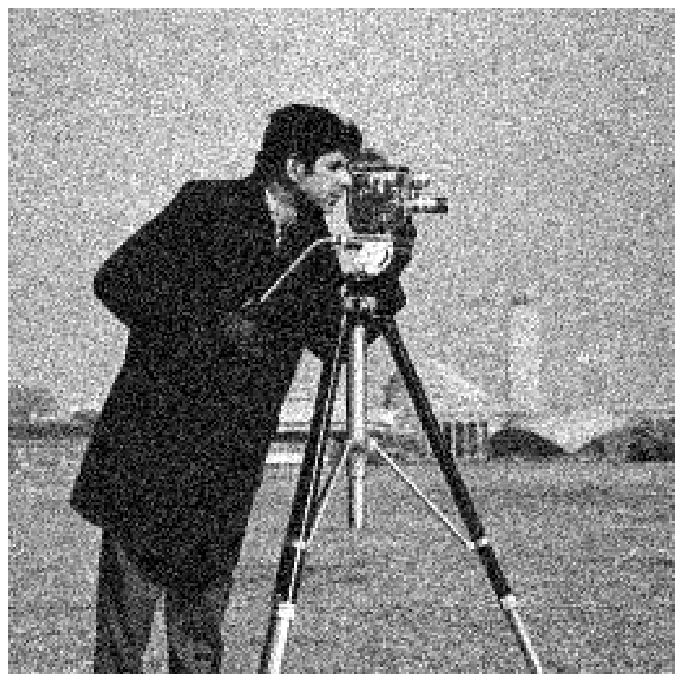}
 }
  \centering
 \subfloat[
 {\scriptsize }
 ] {
 \label{fig:noise1}
 \includegraphics[width=0.35\textwidth]{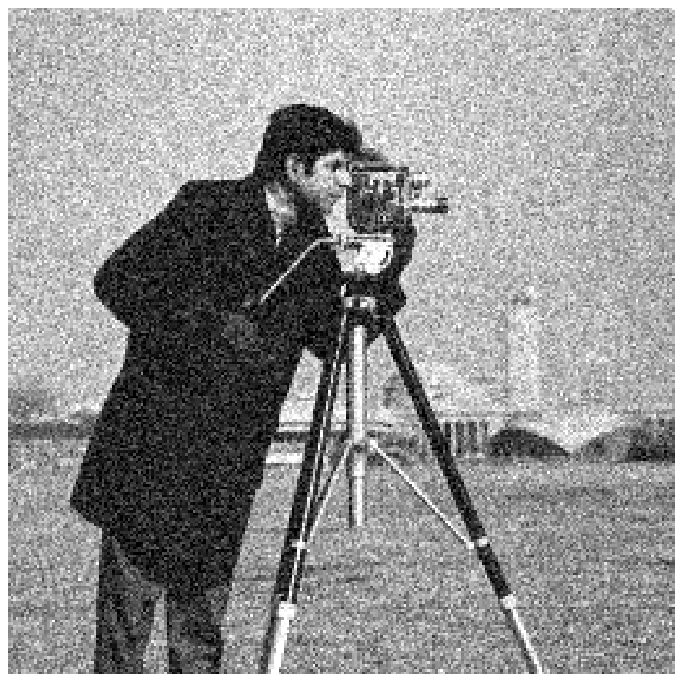}
 }~
 \\
 \subfloat[
 {\scriptsize }
 ] {
 \label{fig:noise3decrypt}
 \includegraphics[width=0.35\textwidth]{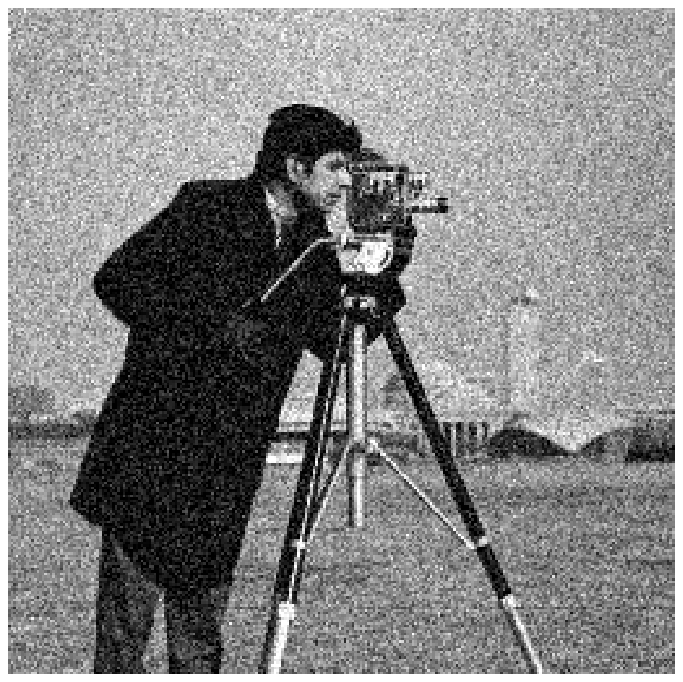}
 }
 \centering
 \subfloat[
 {\scriptsize }
 ] {
 \label{fig:noise7decrypt}
 \includegraphics[width=0.35\textwidth]{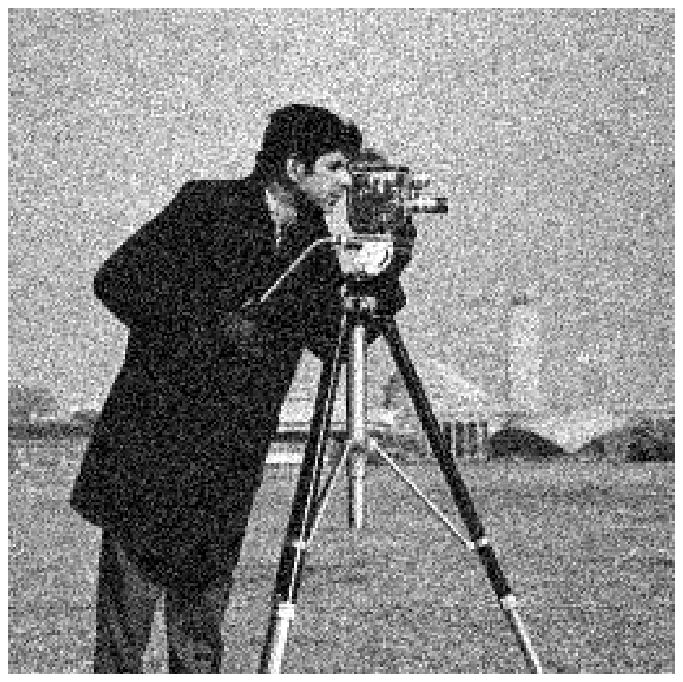}
 }~
 \subfloat[
 {\scriptsize }
 ] {
 \label{fig:noise1decrypt}
 \includegraphics[width=0.35\textwidth]{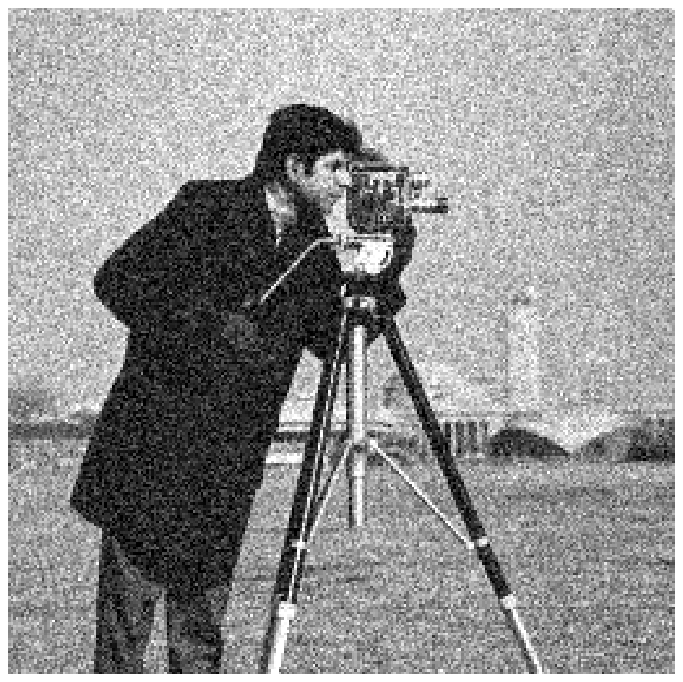}
 }
 \caption{Simulation results of image encryption with noise. (a)-(c) The original images with 0.03, 0.07 and 0.1 white Gaussian noise, respectively; (d)-(f) the corresponding retrieved noise images.
 }
 \label{fig:adaptive:noise}
\end{figure*}

\section{Conclusion}
In this report, a new formula for representing the complex-valued systems in matrix form was suggested, and the corresponding scheme to realize adaptive synchronization for a general class of complex-valued systems with fully unknown parameters was designed. The scheme was tested on the example of the complex-valued Rabinovich system that describes the parametric excitation of waves in a magneto-active plasma. A novel scheme for secure communication with improved cryptosystem based on the adaptive synchronization was proposed; the scheme is constructed to split the message and inject some bit of information signal into parameters modulation and the other bit into the transmitter states, which, in turn, complicates decryption task for possible intruders. Meanwhile, the information signal can be accurately retrieved at the receiver side by adaptive controllers and a decryption function. Different types of encrypted messages were considered for testing the robustness of the proposed scheme (e.g., plain text and gray images with diverse scales of white Gaussian noise).
\section*{Acknowledgement}
This work was supported by fundings from the Russian Science Foundation (Project 19-41-02002), from Saint Petersburg State University (PURE ID 75206671), and from the General Administration of Missions, Ministry of Higher Education of Egypt.

\bibliographystyle{elsarticle-num}
\bibliography{../../../../../Dropbox/bib/bib_full,../../../../../Dropbox/bib/bib-hidden,../../../../../Dropbox/bib/bib_pll,../../../../../Dropbox/bib/bib_nk,../../../../../Dropbox/bib/bib_leonov,../../../../../Dropbox/bib/genlorenz-bib,../../../../../Dropbox/bib/bib_stab,complex_lorenz}
\end{document}